\newtheorem{lem}{Lemma}
\newtheorem{thm}{Theorem}
\def\\{\cr}
\def\({\left(}
\def\){\right)}
\def\[{\left[}
\def\]{\right]}
\def\<{\langle}
\def\>{\rangle}
\def\Z{{\mathbb Z}}
\def\Q{{\mathbb Q}}
\def\K{{\mathbb K}}
\def\L{{\mathbb L}}
\def\M{{\mathbb M}}
\begin{document}

\title{Residue Classes Having Tardy Totients}

\author{{\sc John B.~Friedlander} \\
{Department of Mathematics, University of Toronto} \\
{Toronto, Ontario M5S 3G3, Canada} \\
{\tt frdlndr@math.toronto.edu} \and
{\sc Florian~Luca} \\
{Instituto de Matem{\'a}ticas}\\
{Universidad Nacional Aut\'onoma de M{\'e}xico} \\
{C.P. 58089, Morelia, Michoac{\'a}n, M{\'e}xico} \\
{\tt fluca@matmor.unam.mx}}

\pagenumbering{arabic}

\date{\today}

\maketitle

\begin{abstract}
We show, in an effective way, that there exists a sequence of
congruence classes $a_k\pmod {m_k}$ such that the minimal solution
$n=n_k$ of the congruence $\phi(n)\equiv a_k\pmod {m_k}$ exists and
satisfies $\log n_k/\log m_k\to\infty $ as $k\to\infty$. Here,
$\phi(n)$ is the Euler function. This answers a question raised in
\cite{FS}. We also show that every congruence class containing an
even integer contains infinitely many values of the Carmichael
function $\lambda(n)$ and the least such $n$ satisfies $n\ll
m^{13}$.
\end{abstract}

\section{Introduction}

Let $\phi(n)$ be the Euler function of $n$. The number $\phi(n)$ is
also referred to as the {\it totient} of $n$, and so the values of
the Euler function are referred to as {\it totients}. There are a
number of papers in the literature dealing with the question of
which arithmetical progressions contain infinitely many totients.
Since all totients $>1$ are even, it follows that the only
arithmetical progressions that can contain infinitely many totients
are the ones which contain even integers. Dence and Pomerance
\cite{DP} established that if a congruence class $a\pmod m$ contains
at least one multiple of $4$, then it contains infinitely many
totients. Later, Ford, Konyagin and Pomerance \cite{FKP} gave a
characterization of which arithmetical progressions consisting
entirely of numbers congruent to $2\pmod 4$ contain infinitely many
values of the Euler function. They also showed that the union of all
such residue classes which are totient-free (i.e., do not contain
totients) has asymptotic density $1/4$ (as does the entire
progression $2\pmod 4$), establishing in this way that almost all
integers which are $\equiv 2\pmod 4$ are in a residue class that is
totient-free. See also \cite{N} for related results.

\medskip

Recently, the first--named author and Shparlinski \cite{FS} looked
at progressions $a\pmod m$ containing totients and asked about the
size of the smallest totient, denoted by $N(a,m)$, in such a
progression. Under the restriction $\gcd(2a,m)=1$ (note that in this
case $m$ is necessarily odd), they established that $N(a,m)\le
m^{3+o(1)}$ holds uniformly in $a$ and $m$ as $m\to\infty$. When $m$
is prime, the exponent $3$ can be improved to 2.5. They asked
whether a similar result holds, perhaps with an exponent larger than
$3$, provided that one eliminates the assumption that $2a$ and $m$
are coprime. Specifically, Open Question 7 in \cite{FS} is the
following.

\medskip

\noindent {\bf Open Question.} {\it Does there exist a positive
constant $A$ such that, for every pair of integers $a$ and $m$, if
there exists an integer $n$ with $\phi(n)\equiv a\pmod m$, then
there exists such an integer with $n<m^A$?}

\medskip

Here, we give a negative answer to the above question. In what
follows, we write $\log x$ for the natural logarithm of $x$.  Our
result is the following.

\begin{thm}
\label{thm:1} There exists a sequence $a_k\pmod {m_k}$ of
arithmetical progressions with $m_k\to\infty$ such that $N(a_k,m_k)$
exists and satisfies
\begin{equation}
\label{eq:maineq1} N(a_k,m_k)\ge m_k^{(c_0+o(1))(\log\log
m_k/\log\log\log m_k)^{1/2}}
\end{equation}
as $k\to\infty$, where $c_0>0$ is an absolute constant.
\end{thm}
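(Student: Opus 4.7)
The plan is to construct the pair $(a_k,m_k)$ explicitly via a Chinese Remainder Theorem setup with a parameter $y=y_k\to\infty$. Take $m_k=\prod_{q\in\cQ_k}q^{e(q)}$ for a carefully chosen finite set $\cQ_k$ of primes and exponents $e(q)\ge 1$, and specify the residue of $a_k$ modulo each $q^{e(q)}$. The local residues are chosen with two simultaneous aims: first, that some $n$ actually satisfies $\phi(n)\equiv a_k\pmod{m_k}$, so that $N(a_k,m_k)$ is well-defined; and second, that every such $n$ is forced to have a rich prime factorization, containing for each $q\in\cQ_k$ a prime factor $r_q$ in a prescribed residue class modulo $q^{e(q)}$, with the cheap alternative of $q^{e(q)+1}\mid n$ being ruled out by a careful choice of the residue of $a_k$ modulo $q^{e(q)+1}$.

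To establish existence, one exhibits $n=\prod_q r_q$, where each $r_q$ is chosen via Dirichlet's theorem (or quantitatively via Linnik's theorem) to lie in the prescribed class. For the lower bound on $N(a_k,m_k)$, one argues that every admissible $n$ must be divisible by a distinct prime $r_q$ in each of the prescribed residue classes, so $\log n\ge\sum_{q\in\cQ_k}\log r_q$. A pigeonhole-style choice of the residue classes, in the spirit of Rankin's construction of large gaps between primes, ensures that the smallest prime in each prescribed class is substantially larger than $q^{e(q)}$ itself, so the resulting lower bound on $\log n$ exceeds $\log m_k$ by a factor tending to infinity.

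The inequality \eqref{eq:maineq1} then comes out of optimizing $y$, $|\cQ_k|$, and the exponents $e(q)$, with the specific shape $(\log\log m_k/\log\log\log m_k)^{1/2}$ arising from balancing the aggregate excess $\sum(\log r_q-\log q^{e(q)})$ against the growth of $\log m_k=\sum e(q)\log q$. The main obstacle is the interplay in the construction itself: one must simultaneously arrange satisfiability of the local congruences, force the desired prime-factor structure (ruling out all cheap factorizations, such as $q^{e(q)+1}\mid n$ or alternative splittings of the required $q$-adic valuation of $\phi(n)$ across several prime factors of $n$), and select the prescribed residue classes so that the pigeonhole gain accumulates to the claimed bound. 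Because $\phi$ on prime powers admits several distinct pathways to any given residue modulo $q^{e(q)}$, this simultaneous tuning of $a_k\pmod{q^{e(q)+1}}$ absorbs the bulk of the technical work.
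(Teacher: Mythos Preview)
Your proposal has a genuine gap at its core: the step ``every such $n$ is forced to have \ldots\ a prime factor $r_q$ in a prescribed residue class modulo $q^{e(q)}$'' does not follow from any congruence condition on $\phi(n)$ alone. The map from the multiset of residues $\{p\bmod q^{e(q)}:p\mid n\}$ (together with exponents) to $\phi(n)\bmod q^{e(q)}$ is wildly non-injective. Even if you pin down $\phi(n)\pmod{q^{e(q)}}$ and $\phi(n)\pmod{q^{e(q)+1}}$ exactly, you cannot force any single prime divisor of $n$ into a specific residue class: for instance, $n$ could be a single prime $r\equiv a_k+1\pmod{q^{e(q)}}$, or a product $r_1r_2$ with $(r_1-1)(r_2-1)\equiv a_k\pmod{q^{e(q)}}$, and in the latter case the pair $(r_1\bmod q^{e(q)},r_2\bmod q^{e(q)})$ ranges over essentially a whole hyperbola. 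The Rankin-type pigeonhole you invoke lets you choose one residue class per modulus whose least prime is large, but nothing in your setup blocks the many alternative factorizations that land $\phi(n)$ in the right class via small primes in other residue classes. You flag this yourself (``absorbs the bulk of the technical work''), but it is not a technicality to be absorbed; it is the entire problem, and no mechanism is offered.

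The paper's proof proceeds along completely different lines and sidesteps this issue by first collapsing the shape of $n$. One takes $a=2q$ with $q$ prime and $m=12p$; since $2\,\|\,a$ and $4\mid m$, any $n$ with $\phi(n)\equiv a\pmod m$ must be of the form $r^\ell$ or $2r^\ell$, so that $\phi(n)=r^{\ell-1}(r-1)$ and the congruence becomes $f_\ell(r)\equiv 0\pmod p$ with $f_k(X)=X^k-X^{k-1}-a$. One then shows the $f_k$ are irreducible, arranges (by choosing $q$) that the compositum of their splitting fields for $2\le k\le L$ has Galois group $\prod_{k=2}^L S_k$, and applies an effective Chebotarev theorem (Lagarias--Montgomery--Odlyzko) to find a prime $p$ for which $f_k$ has no root modulo $p$ for $2\le k\le L-1$ but $f_L$ splits. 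This forces $\ell\ge L$, and a further sieving step forces $r$ to be large, yielding the bound with $L\asymp(\log\log m/\log\log\log m)^{1/2}$; existence of a totient in the class comes from the splitting of $f_L$ modulo $p$. The argument is Galois-theoretic rather than Rankin-type, and the reduction to prime-power $n$ is the indispensable first move.
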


The value $c_0=2/5$ is acceptable in Theorem \ref{thm:1}.  Under the
hypothesis of the Generalized Riemann Hypothesis for certain
algebraic number fields our argument yields the stronger lower bound
\begin{equation}
\label{eq:maineq2} N(a_k,m_k)\ge m_k^{(c_1+o(1))(\log m_k/\log\log
m_k)^{1/2}}\ ,
\end{equation}
as $k\to\infty$, where $c_1=2^{-1/2}$.

\medskip

While the Euler function $\phi(n)$ measures the size of the
multiplicative group $\left(\Z/n\Z\right)^\times$, the exponent of
this group (i.e., largest order of elements) is called the {\it
Carmichael function\/} of $n$ and is denoted by $\lambda(n)$. Since
$(-1)^2\equiv 1\pmod n$, it follows that $\lambda(n)$ is even for
all $n\ge 3$. While the existence of totients in arithmetical
progressions has received interest, the analogous problem concerning
the presence of values of the Carmichael function in residue classes
seems not to have been investigated. Although it is often the case
that problems for the latter function are the more difficult of the
two, in case of the question of which progressions occur, the
Carmichael function is the easier. As for the question answered for
$\phi$ by Theorem~\ref{thm:1}, the answer for $\lambda$ is the
opposite.

\begin{thm}
\label{thm:2} If the arithmetical progression $a\pmod m$ contains an
even number, then it contains infinitely many values of the
Carmichael function. Furthermore, writing $L(a,m)$ for the least
integer $n$ such that $\lambda(n)\equiv a\pmod m$, we have
$L(a,m)\ll m^{13}$.
\end{thm}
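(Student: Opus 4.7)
The strategy is to produce $n \ll m^{13}$ with $\lambda(n) \equiv a \pmod m$ by taking $n$ to be either a single prime or a product of two primes (possibly multiplied by a power of $2$), with the primes supplied by Linnik's theorem on the least prime in an arithmetic progression. Normalize $0 \leq a < m$; since $a$ is even, $a+1$ is odd, so $\gcd(a+1, m)$ is odd.

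In the easy case $\gcd(a+1, m) = 1$, Linnik's theorem gives a prime $p \equiv a+1 \pmod m$ with $p \ll m^L$, where $L$ is the unconditional Linnik constant ($L < 5.2$). Setting $n = p$, we have $\lambda(n) = p-1 \equiv a \pmod m$, comfortably within $n \leq m^{13}$; Dirichlet then gives infinitely many such $n$.

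In the hard case $\gcd(a+1, m) > 1$, I would take $n = pq$ for two distinct primes, so that $\lambda(n) = \operatorname{lcm}(p-1, q-1)$. Writing $M$ for the odd part of $m$, I first choose a prime $p$ with $\gcd(p-1, M) = 1$; by CRT, such $p$ occupies a positive fraction of classes modulo $\operatorname{rad}(M)$, and Linnik's theorem applied within one admissible class yields $p \leq m^{O(1)}$. For a suitable divisor $g$ of $p-1$, writing $q-1 = g h$ with $\gcd(h, (p-1)/g) = 1$, the condition $\operatorname{lcm}(p-1, q-1) \equiv a \pmod m$ becomes the linear congruence
\[
q \equiv 1 + g \cdot a \cdot (p-1)^{-1} \pmod{gm}.
\]
Provided this class is coprime to $gm$, Linnik supplies a prime $q \leq (gm)^L$, and a careful tracking of the exponents of $p$ and $q$ produced this way yields $n = pq \leq m^{13}$, with infinitely many choices of $q$ arising from the full Dirichlet progression.

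The main obstacle is arranging the Dirichlet admissibility of the class for $q$, which reduces to $\gcd(p - 1 + g a,\, m) = 1$. This imposes on $p$ a second forbidden residue $1 - g a$ modulo each prime $q' \mid m$, beyond the condition $p \not\equiv 1 \pmod{q'}$. For odd $q' \geq 3$, at least $q' - 2 \geq 1$ residues remain admissible, so CRT delivers a suitable $p$, with $g = 2$ a convenient default once $2 \mid p - 1$. The prime $q' = 2$ (when $2 \mid m$) needs separate treatment since $\gcd(p-1, 2) = 1$ would force $p = 2$; one instead inserts a factor $2^k$ into $n$ and uses $\lambda(2^k) = 2^{k-2}$ for $k \geq 3$ to tune the 2-adic valuation of $\lambda(n)$ independently of its odd part. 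Managing this case analysis in concert with the prime factorization of $\gcd(a+1, m)$, and optimizing the exponent, is the technical heart of the argument.
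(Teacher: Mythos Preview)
Your outline diverges substantially from the paper's argument, and as written it has a genuine gap in the ``hard'' case.

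\textbf{The gap.} You want $\operatorname{lcm}(p-1,q-1)\equiv a\pmod m$ and reduce this, for a chosen divisor $g\mid p-1$, to the linear congruence
\[
q\equiv 1+g\,a\,(p-1)^{-1}\pmod{gm}.
\]
But this congruence only pins down $q-1\pmod{gm}$; it does \emph{not} force $\gcd(p-1,q-1)=g$. For $\operatorname{lcm}(p-1,q-1)=(p-1)h$ with $q-1=gh$, you need $\gcd\bigl(h,(p-1)/g\bigr)=1$, and this is an extra condition on $q$ modulo each prime divisor of $(p-1)/g$. Those primes need not divide $m$ (indeed you arranged $\gcd(p-1,M)=1$), so they inflate the Linnik modulus from $gm$ to something like $gm\cdot\operatorname{rad}\bigl((p-1)/g\bigr)$, which can be as large as $m\,(p-1)$. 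With $p$ already of size $m^{O(1)}$, this pushes $q$ and hence $n=pq$ far beyond $m^{13}$. Your sketch acknowledges the coprimality condition on $h$ but never explains how to enforce it without losing control of the modulus; the $2$-adic patch via a factor $2^k$ is likewise only gestured at.

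\textbf{What the paper does instead.} The paper avoids this asymmetry entirely. After normalizing so that $d=\gcd(a,M)$ is even (with $M=2^{\alpha}m$), it looks for $n=p_1p_2$ with \emph{both} primes in the progression $1\pmod d$, writing $p_i=d\lambda_i+1$. The target becomes the multiplicative congruence $\lambda_1\lambda_2\equiv a_1\pmod{m_1}$ with $a_1=a/d$, $m_1=M/d$, $\gcd(a_1,m_1)=1$. This is solved prime-power by prime-power: for each $p^{\beta}\|m_1$ one writes down explicit residues for $\lambda_1,\lambda_2$ (using a primitive root when $a_1d^2\equiv -1\pmod p$) that simultaneously give the correct product and keep $d\lambda_i+1$ coprime to $p$. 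CRT glues these together, and Heath--Brown's Linnik constant applied to the modulus $dm_1=M\le 2^{\alpha}m$ gives $p_i\ll m^{6.5}$, hence $n\ll m^{13}$. The symmetric set-up with $d\mid p_i-1$ is precisely what lets the paper work modulo $m_1$ alone, without extraneous congruences coming from uncontrolled prime factors of $p-1$.
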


\section{Proof of Theorem \ref{thm:1}}

We start with the following lemma.

\begin{lem}
\label{lem:1} Let $k\ge 2$ be an integer and $a=2q$, where $q>3$ is
prime. Then $f_k(X)=X^k-X^{k-1}-a\in \Q[X]$ is irreducible.
\end{lem}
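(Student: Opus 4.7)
The plan is to combine the rational root theorem with a $2$-adic Newton polygon analysis. The key observation is that any nontrivial factorization of $f_k$ over $\Q$, read in $\Q_2[X]$, would force a linear factor and hence an integer root of $f_k$, which is then easily excluded.

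First I would eliminate integer roots. By the rational root theorem any root of the monic polynomial $f_k$ lying in $\Q$ must be an integer dividing the constant term $-2q$, hence lies in $\{\pm 1, \pm 2, \pm q, \pm 2q\}$. A short case analysis, using that $q>3$ is an odd prime, disposes of each candidate: for instance, $f_k(\pm 1)$ equals $\pm 2 - 2q$ or $-2q$ and so is nonzero; $f_k(2) = 2^{k-1}-2q$ vanishes only for $q = 2^{k-2}$, which is never a prime exceeding $3$; and $f_k(\pm q)$, $f_k(\pm 2q)$ are dispatched by straightforward size estimates.

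Next I would analyze $f_k$ over $\Q_2$. Its Newton polygon has vertices $(0,1)$, $(k-1,0)$ and $(k,0)$, consisting of two segments: one of slope $-1/(k-1)$ and horizontal length $k-1$, and one of slope $0$ and length $1$. The Newton polygon theorem then yields a factorization $f_k = g_1 g_2$ in $\Q_2[X]$, with $g_1$ monic of degree $k-1$ whose roots have valuation $1/(k-1)$, and $g_2$ monic linear with a root of valuation $0$. Since $1/(k-1)$ is in lowest terms with denominator $k-1$, any $\Q_2$-irreducible factor of $g_1$ must contain a root whose field of definition has ramification index divisible by $k-1$, forcing its degree to be at least $k-1$; hence $g_1$ itself is irreducible over $\Q_2$.

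Finally, if $f_k = gh$ were a nontrivial monic factorization in $\Z[X]$, then reading the equation in $\Q_2[X]$ and invoking unique factorization of $g_1 g_2$ into irreducibles, the pair $\{g,h\}$ would have to coincide with $\{g_1, g_2\}$. In particular one of $g$, $h$ would equal the monic linear polynomial $g_2$, providing an integer root of $f_k$ and contradicting the first step. The main obstacle is the Newton polygon bookkeeping: the argument depends on the standard fact that a segment of slope $-a/b$ in lowest terms whose length equals $b$ contributes an irreducible $\Q_p$-factor of degree $b$, which is precisely what converts the two-segment shape of the polygon into irreducibility of $g_1$. Everything else is routine.
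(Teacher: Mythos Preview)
Your proof is correct and takes a genuinely different route from the paper's. The paper works modulo the odd prime $q$ rather than $2$-adically: since $f_k(X)\equiv X^{k-1}(X-1)\pmod q$, a monic factorization $f_k=gh$ reduces to $g\equiv X^{\alpha}(X-1)$, $h\equiv X^{\beta}\pmod q$ (up to relabeling); if $\alpha\beta>0$ then $q\mid g(0)$ and $q\mid h(0)$, forcing $q^2\mid g(0)h(0)=-2q$, which is absurd, so $\alpha=0$ and $g$ is linear. The resulting integer root $x_0\equiv 1\pmod q$ is then eliminated directly from $x_0^{k-1}(x_0-1)=2q$, using that $q\mid x_0-1$ forces $x_0^{k-1}\mid 2$.

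Structurally the two arguments exploit the same phenomenon: the constant term $-2q$ is exactly divisible by a prime at which the coefficient of $X^{k-1}$ is a unit, so locally $f_k$ splits as (irreducible of degree $k-1$)$\times$(linear). You make this explicit via the Newton polygon over $\Q_2$; the paper carries out the equivalent computation by hand over $\Z/q\Z$. The paper's version is a touch more elementary in that it needs no $p$-adic input beyond unique factorization in $(\Z/q\Z)[X]$, and it gets the extra constraint $x_0\equiv 1\pmod q$ for free, shortening the root elimination. Your version is cleaner once the Newton polygon machinery is available and makes the mechanism (two segments, one with denominator $k-1$ in lowest terms) completely transparent; in fact either prime, $2$ or $q$, would serve in either framework.
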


\begin{proof}
Assume for a contradiction that $f_k(X)=g(X)h(X)$, where $g(X)$ and
$h(X)$ are monic, with integer coefficients and positive degrees.
Reducing modulo $q$, we get $g(X)h(X)\equiv X^{k-1}(X-1)\pmod q$,
therefore, up to relabeling the polynomials $g(X)$ and $h(X)$, we
may assume that $g(X)=X^{\alpha}(X-1)\pmod q$ and
$h(X)=X^{\beta}\pmod q$, where $\alpha+\beta=k$. If $\alpha\beta>0$,
then $g(0)\equiv h(0)\equiv 0\pmod q$. Thus, the last coefficient of
each of $g(X)$ and $h(X)$ is a multiple of $q$. This implies that
the last coefficient of $f_k(X)$, which is $f_k(0)=-a=-2q=g(0)h(0)$
is a multiple of $q^2$, which is a contradiction. This shows that
$\alpha\beta=0$ and since both $g(X)$ and $h(X)$ have positive
degrees, it follows that $\alpha=0$. Hence, $g(X)\equiv X-1\pmod q$,
therefore $g(X)$ is linear. Write $g(X)=X-x_0$ for some $x_0\in\Z$
with $x_0\equiv 1\pmod q$. Then $x_0$ is a root of $f_k(X)$,
therefore $2q=a=x_0^{k-1}(x_0-1)$. Since $q\mid x_0-1$, it follows
that $x_0^{k-1}\mid 2$. If $k\ge 3$, we get $x_0=\pm 1$, therefore
$2q=x_0^{k-1}(x_0-1)\in \{0,\pm 2\}$, which is impossible. If $k=2$,
we then get that $x_0\in \{\pm 1,\pm 2\}$, therefore
$2q=a=x_0(x_0-1)\in \{0,2,6\}$, which is again impossible. Hence,
$f_k(X)\in \Q[X]$ is irreducible.
\end{proof}

We are now ready to prove Theorem \ref{thm:1}. Let $L$ be any large
even integer. We let $q>L$ be a prime congruent to $1$ modulo $3$,
put $a=2q$ and let again $f_k(X)=X^k-X^{k-1}-a\in \Q[X]$ be the
polynomials that appear in Lemma \ref{lem:1} for $k=2,3,\ldots,L$.
They are all irreducible by Lemma \ref{lem:1}. For each
$k=2,\ldots,L$, we let $\theta_k$ be some root of $f_k(X)$. We write
$\theta_k^{(1)},\ldots,\theta_k^{(k)}$ for all the conjugates of
$\theta_k$ with the convention that $\theta_k^{(1)}=\theta_k$. We
put $\K_{k}^{(j)}=\Q[\theta_k^{(j)}]$ for $j=1,\ldots,k$. We also
put ${\overline{\K_k}}=\Q[\theta_k^{(1)},\ldots,\theta_k^{(k)}]$ for
the splitting field of $f_k(X)$ and $\M=\Q[\theta_k^{(j)}:2\le k\le
L,1\le j\le k]$ for the splitting field of $\prod_{k=2}^L f_k(X)$.
Our first objective is to insure that we can choose an appropriate
$a$ of the desired form which is not too large such that
$G={\text{\rm Gal}}(\M/\Q)=\prod_{k=2}^L S_k$, where we write $S_m$
for the symmetric group on $m$ letters.

\medskip

We start by computing the discriminant of $\K_k=\K_k^{(1)}$. It is
well-known (see \cite{Sw}) that if $f(X)=X^n+AX^s+B\in \Q[X]$, where
$1\le s\le n-1$, then the discriminant of the polynomial $f(X)$ is
$$
\Delta(f)=(-1)^{n (n-1)/2}
B^{s-1}\left(n^nB^{n-s}+(-1)^{n-1}s^s(n-s)^{n-s}A^n\right).
$$
Thus, if we put $\Delta_k=\Delta(f_k)$, we then have that
$n=k,~s=k-1,~A=-1$ and $B=-a$, therefore
$$
\Delta_k=(-1)^{(k-1)(k+2)/2}a^{k-2}(ak^k+(k-1)^{k-1}).
$$
Let ${\cal M}$ be the set of all prime numbers $p\ne 2,~q$ that can
appear as divisors of $\gcd(\Delta_j,\Delta_k)$ for some $2\le
j<k\le L$. If $p\ne 2,q$ is a prime factor of $\Delta_j$, then
$p\mid aj^j+(j-1)^{j-1}$. Since $j$ and $j-1$ are coprime, we see
that $p\nmid j$, therefore $j$ is invertible modulo $p$. Thus,
$a\equiv -(j-1)^{j-1}j^{-j}\pmod p$. If additionally $p\mid
\Delta_k$, then also $a\equiv -(k-1)^{k-1}k^{-k}\pmod p$. Thus,
$$
-(j-1)^{j-1}j^{-j}\equiv -(k-1)^{k-1}k^{-k}\pmod p
$$
leading to $p\mid (j-1)^{j-1}k^k-j^j(k-1)^{k-1}$. Thus, writing
$$
M=\prod_{2\le j<k\le L} \left((j-1)^{j-1}k^k-j^j(k-1)^{k-1}\right),
$$
we conclude that
$$
\prod_{p\in {\cal M}}p\mid M.
$$
Note that $M>0$ since the function $x\mapsto (x-1)^{x-1} x^{-x}$ is
decreasing for $x\ge 2$. Note further that $M$ consists of a product
of $\binom{L-1}{2}$ factors none of which exceeds
$L^L(L-1)^{L-1}<L^{2L}$. Thus,
$$
M\le \left(L^{2L}\right)^{\binom{L-1}{2}}<L^{L^3}.
$$
Thus, writing $\omega(m)$ for the number of distinct prime factors
of $m$ and using the known fact that $\omega(m)\le (1+o(1))\log
m/\log\log m$ as $m\to\infty$, we get that
\begin{eqnarray}
\label{eq:P} \#{\cal M} & \le & \omega(M)\le (1+o(1))\frac{\log
M}{\log\log M}  \le (1+o(1))\frac{L^3\log L}{\log(L^3\log
L)}\nonumber\\
& \le & (1/3+o(1))L^3
\end{eqnarray}
as $L\to\infty$.

\medskip

Let $T>L$ be some large number to be determined later. We search for
a value of $a=2q$, where $L<q\le T$ is prime congruent to $1$ modulo
$3$, such that for each $k=2,\ldots,L$ there exists a prime
$p_k\|\Delta_k$ such that $p_k$ does not divide any of the
$\Delta_j$ for $j\ne k$ in $\{2,\ldots,L\}$. To conclude that such a
$q$ exists, assume the contrary. Then for each prime $q\in (L,T]$
congruent to $1$ modulo $3$, there exists $k\in \{2,\ldots,L\}$ such
that $ak^k+(k-1)^{k-1}=sy$ holds with some square-free number $s$
dividing $2M$ and some positive integer $y$ which is square-full.
Recall that a positive integer $m$ is square-full if $p^2\mid m$
whenever $p$ is a prime factor of $m$. The number of choices for $s$
is at most $2^{\omega(M)+1}$. For a large positive real number $x$,
the number of square-full numbers $y\le x$ does not exceed
$3x^{1/2}$ (see, for example, Th\'eor\`eme 14.4 in \cite{Iv}). Since
$ak^k+(k-1)^{k-1}\le 2aL^L\le 4TL^L$, it follows that $y$ can be
chosen in at most $3(4TL^L)^{1/2}$ ways. Hence, the number of
possibilities for $a\in (L,T]$ such that there exists $k\in
\{2,\ldots,L\}$ with $ak^k+(k-1)^{k-1}=sy$, where $s\mid 2M$ and $y$
is square-full is
$$
\le 3(L-1)2^{\omega(M)+1}(4TL^L)^{1/2}.
$$
Since there are $\pi(T;3,1)-\pi(L;3,1)$ primes $q\in (L,T]$, we get
that a desired choice for $a$ is possible once
\begin{equation}
\label{eq:piTL} \pi(T;3,1)-\pi(L;3,1)>
3(L-1)2^{\omega(M)+1}(4TL^L)^{1/2}.
\end{equation}
Here, we used, as usual, for coprime integers $a$ and $b$ the
notation $\pi(x;b,a)$ for the number of primes $p\le x$ congruent to
$a$ modulo $b$. Let $T_L$ be the minimal positive integer satisfying
inequality \eqref{eq:piTL}. Estimate \eqref{eq:P} together with
standard estimates for primes in arithmetical progressions shows
that
$$
T_L\le \exp((2(\log 2)/3+o(1))L^3)\qquad {\text{\rm as}}~L\to\infty.
$$
Since $p_k\ne 2,q$ exactly divides $\Delta_k$, it follows that
$p_k\mid \Delta_{\K_k}$, where for a field $\L$ we put $\Delta_\L$
for its discriminant. Furthermore, Theorem 1.1 in \cite{CMS} and the
remarks following it show that if $f(X)=X^n+AX^s+B$ is irreducible,
with integer coefficients and satisfies the conditions
$\gcd(n,As)=\gcd(A(n-s),B)=1$ and there is a prime divisor $q$ of
$B$ such that $q\|b$, then the Galois group $G(f)$ of $f(X)$ over
$\Q$ is doubly transitive. When $f(X)=f_k(X)$, we have that
$n=k,~s=k-1,A=-1$ and $B=-a=-2q$, therefore all three conditions
$\gcd(n,As)=\gcd(A(n-s),B)=1$ and $q\|B$ are satisfied. Thus, the
Galois group of $f_k(X)$ over $\Q$ is doubly transitive. The remarks
following Theorem 1.1 in \cite{CMS} show that if furthermore there
exists a prime $p_k$ not dividing $\gcd(k-1,2q)$ (which for us
equals $1$ or $2$ since $q>L$) such that $p_k$ exactly divides
$\Delta_k$, then the Galois group of $f_k(X)$ over $\Q$ contains a
transposition and is, in particular, the full symmetric group $S_k$.
Thus, we have showed that ${\text{\rm Gal}}(\K_k/\Q)=S_k$.

\medskip

We are now ready to show that $G=\prod_{k=2}^L S_k$. Since
${\text{\rm Gal}}(\K_k/\Q)=S_k$ for each $k\in \{2,\ldots,L\}$, it
suffices to show the family of fields
$\{{\overline{\K_k}}:k=2,\ldots.L\}$ consists of {\it linearly
disjoint} fields, namely that if $\{k_1,\ldots,k_{s+1}\}$ is any
subset of $\{2,\ldots,L\}$ with $k_{s+1}\not\in \{k_1,\ldots,k_s\}$,
then
\begin{equation}
\label{eq:compo} {\overline{\K_{k_1}}} {\overline{\K_{k_2}}}\cdots
{\overline{\K_{k_s}}}\cap {\overline{\K_{k_{s+1}}}}=\Q.
\end{equation}
Well, let us denote by $\L$ the field appearing in the left hand
side of the above equality and assume that it is not $\Q$. Since
$\L$ is an intersection of normal extensions of $\Q$, it follows
that it is itself a normal extension of $\Q$. Furthermore, if $p$ is
a prime dividing $\Delta_{\L}$ then $p$ divides both
$\Delta_{k_{s+1}}$ and $\prod_{i=1}^s \Delta_{k_i}$. In particular,
$p\ne p_{k_{s+1}}$, which shows that $\L\not\subseteq
{\overline{\K_{k_{s+1}}}}$. Since $\L$ is a normal extension of $\Q$
properly contained in ${\overline{\K_{k_{s+1}}}}$, it follows that
${\text{\rm Gal}}(\K_{k_{s+1}}/\L)$ is a normal proper subgroup of
${\text{\rm Gal}}(\K_{k_{s+1}}/\Q)=S_k$. The only such subgroup is
$A_k$ and, by Galois correspondence, we get that
$\L=\Q({\sqrt{\Delta_{\K_{k_{s+1}}}}})$. But this last quadratic
field has the property that $p_{k_{s+1}}$ divides its discriminant,
whereas we have established that the prime $p_{k_{s+1}}$ cannot
divide the discriminant of the field $\L$. This contradiction shows
that indeed equality \eqref{eq:compo} holds, which establishes our
claim on the structure of the Galois group of $\M$.

\medskip

Let ${\cal C}$ be a conjugacy class of $G$ containing an element
$\sigma=(\sigma_2,\ldots,\sigma_L)$ where $\sigma_k\in S_k$ has no
fixed points for all $k=2,\ldots,L-1$, but $\sigma_L\in S_L$ is the
identical permutation. By Chebotarev's Density Theorem, a positive
proportion of all the primes $p$ not dividing the discriminant of
$\M$ have the property that their Frobenius ${\text{\rm Frob}}_p$ is
in the conjugacy class ${\cal C}$. If $p$ is such a prime, then
$f_k(X)\pmod p$ has no root modulo $p$ for any $k=2,\ldots,L-1$,
while $f_L(X)\pmod p$ splits in linear factors modulo $p$.

\medskip

We now let $x$ be a large positive real number. We need a lower
bound for the number of primes $p\le x$ in the conjugacy class
${\cal C}$ of $G$. To this end, we use the following result which is
implicit in the work of Lagarias, Montgomery and Odlyzko on the
least prime ideal in the Chebotarev Density Theorem \cite{LMO}.

\begin{lem}
\label{lem:2} Let $\M$ be a Galois extension of $\Q$ of discriminant
$\Delta_{\M}$ having Galois group $G$. Let ${\cal C}$ be a conjugacy
class in $G$ and define
$$
\pi_{\cal C}(x,\M/\Q)=\#\{p\le x:p\nmid \Delta_{\M},~{\text{\rm
Frob}}_p\in {\cal C}\}.
$$
There exist absolute constants $A_1$ and $A_2$ such that if
$x>|\Delta_{\M}|^{A_1}$, then
\begin{equation}
\label{eq:initial} \pi_{\cal C}(x,\M/\Q)\gg \frac{\#{\cal
C}}{\#G}\frac{x^{1/5}}{|\Delta_{\M}|^{A_2}}.
\end{equation}
\end{lem}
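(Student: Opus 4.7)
The plan is to extract this estimate from the effective Chebotarev theorem of Lagarias--Montgomery--Odlyzko \cite{LMO}. First, pass from $\pi_{\cal C}$ to the weighted prime-power sum
$$
\psi_{\cal C}(x,\M/\Q) = \sum_{p^m\le x,\ p\nmid\Delta_\M,\ \mathrm{Frob}_{p^m}\in{\cal C}} \log p,
$$
the two being related by $\pi_{\cal C}(x,\M/\Q) \ge (\psi_{\cal C}(x,\M/\Q) - O(x^{1/2}\log(x|\Delta_\M|)))/\log x$, with the error coming from higher prime powers. Then decompose $\psi_{\cal C}$ via Artin's formalism:
$$
\psi_{\cal C}(x,\M/\Q) = \frac{\#{\cal C}}{\#G}\sum_{\chi}\overline{\chi(g_{\cal C})}\,\psi(x,\chi),
$$
where the sum runs over the irreducible characters $\chi$ of $G$, $g_{\cal C}\in{\cal C}$ is any fixed element, and $\psi(x,\chi)$ is the weighted Chebyshev-type sum associated with the Artin $L$-function $L(s,\chi,\M/\Q)$.

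The next step is the explicit formula for each $\psi(x,\chi)$, which expresses it as $\delta(\chi)\,x - \sum_\rho x^\rho/\rho + O(\log|\Delta_\M|\,(\log x)^2)$, the inner sum ranging over the nontrivial zeros of $L(s,\chi)$. The trivial character produces the main term $(\#{\cal C}/\#G)\,x$, and what remains is to control the zero contributions uniformly in $\chi$. For that I would invoke the Stark-type zero-free region for the Dedekind zeta function $\zeta_\M(s)=\prod_\chi L(s,\chi)^{\chi(1)}$: every nontrivial zero satisfies $\Re\rho \le 1 - c/\log(|\Delta_\M|(|\Im\rho|+2))$, with at most one exception, a possible real exceptional zero $\beta_0$ of $\zeta_\M$.

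The core of the argument is the Deuring--Heilbronn repulsion phenomenon: if the exceptional $\beta_0 = 1-\eta$ exists with $\eta$ small, then every other nontrivial zero $\rho$ is forced to satisfy $\Re\rho \le 1 - c_2\log(1/\eta)/\log|\Delta_\M|$. Combining the $-x^{\beta_0}/\beta_0$ contribution of $\beta_0$ itself with the repelled bound on the remaining zeros and optimising across the two competing regimes (namely $\eta$ very small versus $\eta$ moderate) yields an unconditional lower bound valid as soon as $x > |\Delta_\M|^{A_1}$. The balance of the optimisation produces the exponent $1/5$ on $x$ and a polynomial factor $|\Delta_\M|^{-A_2}$ which absorbs the worst-case Siegel effect.

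The main obstacle is precisely the possible exceptional zero of $\zeta_\M$: it rules out any asymptotic of the natural form $(\#{\cal C}/\#G)\,\pi(x)$, and it is the Deuring--Heilbronn trade-off between the location of $\beta_0$ and the position of the other zeros that forces the exponent $1/5$ (rather than $1$) on $x$. All remaining steps are routine manipulations with $L$-function zero estimates, and a key feature for the application in Theorem~\ref{thm:1} is that the density factor $\#{\cal C}/\#G$ survives unchanged in the final lower bound.
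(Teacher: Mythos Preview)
Your outline is correct and rests on the same source as the paper's proof, namely the Lagarias--Montgomery--Odlyzko machinery. The difference is purely one of presentation: rather than redeveloping the explicit formula, the zero-free region, and the Deuring--Heilbronn phenomenon, the paper simply quotes inequality~(6.9) of \cite{LMO} (a smoothed lower bound for $\sum_{p\in{\cal P}({\cal C}),\,p\le x^{10}}(\log p)\widehat{k_2}(p)$ with main term $\tfrac{x^2}{10}\tfrac{\#{\cal C}}{\#G}\min\{1,(1-\beta_0)\log x\}$) together with the Stark-type bound $1-\beta_0>|\Delta_\M|^{-B_4}$ from Corollary~5.2 there, and observes that for $x>|\Delta_\M|^{B_5}$ the main term dominates. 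In particular, the exponent $1/5$ arises in the paper's version not from an explicit optimisation between the two Siegel regimes you describe, but transparently as the ratio $2/10$ between the size $x^2$ of the main term and the range $p\le x^{10}$ of the kernel-weighted sum; the balancing you sketch is already baked into LMO's choice of the kernel $k_2$. Your reconstruction buys a self-contained argument, while the paper's direct citation buys brevity and makes the constants $A_1,A_2$ explicit in terms of LMO's $B_4,B_5$.
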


\begin{proof}
Let
$$
{\cal P}({\cal C})=\{p~:~p\nmid \Delta_{\M},~{\text{\rm Frob}}_p\in
{\cal C}\}.
$$
Inequality (6.9) in \cite{LMO} shows that there exist positive
absolute constants $B_1,B_2,B_3$ and $B_4$ such that
\begin{eqnarray}
\label{eq:someineq} \sum_{\substack{p\in {\cal P}({\cal C})\\ p\le
x^{10}}} (\log p){\widehat{k_2}}(p) & \ge &
\frac{x^2}{10}\frac{\#{\cal C}}{\#G} \min\{1,(1-\beta_0)\log
x\}-B_1x^{7/4}\log |\Delta_{\M}|\nonumber\\
& - & B_2x^2(1-\beta_0)^{B_3\log
x/\log|\Delta_{\M}|}\log|\Delta_{\M}|,
\end{eqnarray}
where $\beta_0\in (0,1)$ satisfies $1-\beta_0>|\Delta_{\M}|^{-B_4}$
(see Corollary 5.2 on page 290 in \cite{LMO}),  and
${\widehat{k_2}}(u)$ is a function whose range is in the interval
$(0,1)$ (see formula (3.7) on page 284). The argument on the top of
page 294 in \cite{LMO}, shows that if we choose
$x>|\Delta_{\M}|^{B_5}$ for a suitable absolute constant $B_5>0$,
then the first term in the right hand side of inequality
\eqref{eq:someineq} dominates. Thus, inequality \eqref{eq:someineq}
implies that
$$
\pi_{\cal C}(x^{10},\M/\Q)\log x\gg \frac{\#{\cal C}}{\#G}
x^2\min\{1,(1-\beta_0)\log x\}\gg \frac{\#{\cal
C}}{\#G}\frac{x^2\log x}{|\Delta_{\M}|^{B_4}},
$$
which in turn implies that
$$
\pi_{\cal C}(x^{10},\M/\Q)\gg \frac{\#{\cal
C}}{\#G}\frac{x^2}{|\Delta_{\M}|^{B_4}},
$$
which is what we wanted to prove with $A_1=10B_5$ and $A_2=B_4$.
\end{proof}

For an algebraic number field $\L$, we write $d_{\L}$ for its
degree. In order to apply the above Lemma \ref{lem:2}, we need upper
bounds on $\Delta_{\M}$. Clearly,
\begin{eqnarray}
\label{eq:degree} d_{\M} & = & \#G=\prod_{k=2}^L k!\le \prod_{k=2}^L
k^k=\exp\left(\sum_{k=2}^L k\log
k\right)\nonumber\\
& = & \exp\left((1/2+o(1))L^2\log L\right),\qquad {\text{\rm
as}}~L\to\infty.
\end{eqnarray}
As for $|\Delta_{\M}|$, we use recursively the fact that if $\K\cap
\L=\Q$, then
\begin{equation}
\label{eq:delta} |\Delta_{\K\L}|^{1/d_{\K\L}}\le
|\Delta_{\K}|^{1/d_{\K}}|\Delta_{\L}|^{1/d_{\L}}
\end{equation}
(see, for example, Proposition 4.9 of \cite{Nar}). Note first that
since the inequality
$$
|\Delta_{\K_k}|=a^{k-2}(ak^k+(k-1)^{k-1})<T_L^k
$$
holds for all sufficiently large $L$, it follows that
$|\Delta_{\K_k}|^{1/k}\le T_L$. Since ${\overline{\K_{k}}}$ is the
compositum of the $k$ linearly disjoint fields ${\K_k}^{(j)}$ each
of degree $k$ for $j=1,\ldots,k$ all having the property that
$|\Delta_{\K_{k}^{(j)}}|^{1/k}\le T_L$, repeated applications of
inequality \eqref{eq:delta} give
$|\Delta_{\overline{\K_k}}|^{1/k!}\le T_L^k$. Finally, since $\M$ is
the compositum of the linearly disjoint fields ${\overline{{\K_k}}}$
for $k=2,\ldots,L$ of degrees $k!$, respectively, repeated
applications of inequality \eqref{eq:delta} once more give that
\begin{eqnarray}
\label{eq:discriminant} |\Delta_{\M}|^{1/d_{\M}} & \le &
\prod_{k=2}^L |\Delta_{\overline{\K_k}}|^{1/k!}\le \prod_{k=2}^L
T_L^k<T_L^{L(L+1)/2}\nonumber\\
& = & \exp(((\log 2)/3+o(1))L^5),\qquad {\text{\rm as}}~L\to\infty.
\end{eqnarray}
In particular, the inequality
\begin{equation}
\label{eq:DeltaM} |\Delta_{\M}|\le \exp\(d_{\M}L^5\)
\end{equation}
holds once $L$ is sufficiently large. Assume now that
$\varepsilon\in (0,1/20)$, and that
\begin{equation}
\label{eq:1} (1/2+\varepsilon)L^2\log L<\log\log x.
\end{equation}
We then get that if $x>x_{\varepsilon}$, then
$$
d_{\M}L^5<\frac{\log x}{\log\log x},
$$
which in turn implies, via inequality \eqref{eq:DeltaM}, that
$$
x>|\Delta_{\M}|^{\log\log x}.
$$
Hence, inequality \eqref{eq:initial} together with the fact that
$\#G\le d_{\M}\ll \log |\Delta_{\M}|$, gives that
\begin{equation}
\label{eq:low} \pi_{\cal C}(x,\M/\Q)\ge x^{1/5-\varepsilon}.
\end{equation}
So, we see from \eqref{eq:1} that if we take
$$
L=\lfloor (2-\varepsilon/3)(\log\log x/\log\log\log
x)^{1/2}\rfloor-\delta,
$$
where $\delta\in \{0,1\}$ is chosen in such a way so that $L$ is
even, then inequality \eqref{eq:1} (and hence, inequality
\eqref{eq:low} also) is satisfied when $x$ is sufficiently large.

\medskip

We now discard the subset ${\cal Q}$ of primes $p\le x$ such that
$p\mid r^{M}-r^{M-1}-a$ for some prime $r\le x^{1/5-2\varepsilon}$
and some $M\in [L,2L\log x]$. Fixing $r$ and $M$, the number of such
primes is $\le \omega(|r^M-r^{M-1}-a|)$ (note that the integer
$r^M-r^{M-1}-a$ is not zero since $f_M(X)\in\Q[X]$ is irreducible
for all $M\ge 2$). Thus, the number of such choices is
$$
<\max\{\log(r^M),\log a\}< \max\{M\log x,\log T_{L}\}<2L(\log x)^2
$$
once $x$ is large. Summing this up over all the
$\pi(x^{1/5-2\varepsilon})$ choices of the prime $r\le x$ and over
all the $\lfloor 2L\log x\rfloor-L+1$ choices for $M$, we have that
$$
\#{\cal Q}<4\pi(x^{1/5-2\varepsilon})L^2(\log
x)^3<200x^{1/5-2\varepsilon}(\log x)^2(\log\log x)^2
$$
possibilities for $p$ once $x$ is large. Thus, if
$x>x_{\varepsilon}$, then
$$
\pi_{\cal C}(x,\M/\Q)-\#{\cal Q}\ge
x^{1/5-\varepsilon}-200x^{1/5-2\varepsilon}(\log x)^2(\log\log
x)^2>x^{1/5-2\varepsilon},
$$
so in particular there are such primes $p>x^{1/5-2\varepsilon}$
which are not in ${\cal Q}$.

\medskip

Let $p$ be one such prime. Look at congruence $a\pmod m$, where
$m=12p$. Assume that $\phi(n)\equiv a\pmod m$. Since $2\| a$ and $m$
is a multiple of $4$, it follows that either $n=4$ or $n=r^{\ell}$
or $2r^{\ell}$ for some odd prime $r$ and positive integer $\ell$.
If $n=4$, we get $2\equiv a\pmod p$, therefore $p\mid 2(q-1)$, which
is impossible since
$$
2(q-1)\le a\le T_L=\exp(O(\log\log x)^3)=x^{o(1)}
$$
as $x\to\infty$, while $p\ge x^{1/5-2\varepsilon}$. Thus,
$n=r^{\ell}$ or $2r^{\ell}$, therefore
$\phi(n)=r^{\ell}-r^{\ell-1}$. Hence, $r^{\ell}-r^{\ell-1}-a\equiv
0\pmod m$. If $\ell=1$, we get that $r\equiv a+1\pmod m$. Since
$a+1=2q+1\equiv 0\pmod 3$ and $3\mid m$, it follows that $3\mid r$,
and since $r$ is prime we get that $r=3$. Thus, $m\mid a-2$, leading
again to $p\mid 2(q-1)$, which is impossible. Thus, $\ell\ge 2$.
Since $r^{\ell}-r^{\ell-1}-a\equiv 0\pmod p$ with some $\ell\ge 2$,
it follows, from the way $p$ was chosen, that $\ell\ge L$ because
$f_k(X)\pmod p$ has no root modulo $p$ for any $k=2,\ldots,L-1$.
Thus, $\ell\ge L$. Since also $p\not \in {\cal Q}$, we get that
either $\ell\ge 2L\log x$, therefore
$$
n\ge r^{\ell}\ge 2^{2L\log x}=x^{(2\log 2)L}>m^{L}
$$
for large $x$, or $\ell\in [L,2L\log x]$, in which case
$r>x^{1/5-2\varepsilon}$, therefore
$$
r^{\ell}\ge x^{(1/5-2\varepsilon)L}>m^{(1/5-3\varepsilon)L}
$$
once $x$ is sufficiently large. Since $\varepsilon>0$ is arbitrary,
this shows that the smallest $n$ such that $\phi(n)\equiv a\pmod m$
satisfies indeed inequality \eqref{eq:maineq1} as $m\to\infty$,
provided that it exists.

\medskip

It remains to show that the progression $a\pmod m$ contains
totients. Well, from the way we choose $p$, the equation
$X^L-X^{L-1}\equiv a\pmod p$ has a solution modulo $p$ (in fact, $L$
distinct ones). Since $p$ and $a$ are coprime, it follows that any
solution $X\equiv x_0\pmod p$ of the above congruence has the
property that $x_0$ is not a multiple of $p$. Let $r$ be a prime
such that $r\equiv x_0\pmod p$. Then, $r^{L}-r^{L-1}-a$ is a
multiple of $p$. Imposing also that $r\equiv 3\pmod 4$, we get that
$r^{L}-r^{L-1}-a=r^{L-1}(r-1)-2q$ is also a multiple of $4$.
Finally, choosing $r\equiv 2\pmod 3$, since $L$ is even, we get
$r^L-r^{L-1}-a\equiv 1-2-2q\equiv 0\pmod 3$, because $q\equiv 1\pmod
3$. Thus, it is enough to choose primes $r$ such that $r\equiv
x_0\pmod p,~r\equiv 3\pmod 4$ and $r\equiv 2\pmod 3$. The above
system of congruences is solvable by the Chinese Remainder Lemma and
its solution is a congruence class modulo $m=12p$ which is coprime
to $m$. This class contains infinitely many primes $r$ by
Dirichlet's theorem on primes in arithmetical progressions and if
$r$ is any such prime then putting $n=r^L$ we have that the totient
$\phi(n)$ is indeed congruent to $a\pmod m$.

\medskip

This completes the proof of Theorem \ref{thm:1}. \qed

\medskip

{\bf Remark.} Under the Generalized Riemann Hypothesis for the
fields $\M$ constructed in the previous proof, we have
\begin{equation}
\label{eq:Chebotarev1} \left|\pi_{\cal C}(x,\M/\Q)-\frac{\#{\cal
C}}{\#{G}}{\text{\rm li}}(x)\right|\ll \frac{\#{\cal
C}}{\#G}x^{1/2}\log(|\Delta_{\M}|x^{d_{\M}})+\log|\Delta_{\M}|
\end{equation}
(see \cite{LO}). With our bound \eqref{eq:discriminant}, we have
$$
\log(|\Delta_{\M}|x^{d_{\M}})\ll d_{\M}(L^5+\log x).
$$
Inequality \eqref{eq:degree} shows that if $\varepsilon\in (0,1/2)$
and the inequality
$$
(1/2+\varepsilon)L^2\log L<(\log x)/2,
$$
holds, then estimate \eqref{eq:Chebotarev1} leads to
$$
\pi_{\cal C}(x,\M/\Q)\ge \frac{\pi(x)}{2\#G}\ge
x^{1/2-10\varepsilon}
$$
provided that $x>x_{\varepsilon}$. We apply again our previous
argument except that the set ${\cal Q}$ is now taken to be the set
of all primes $p$ that divide $r^{M}-r^{M-1}-a$ for some $M\in
[L,2L\log x]$ and some prime $r\le x^{1/2-20\varepsilon}$ and
conclude that there exist primes $p>x^{1/2-10\varepsilon}$ which are
not in ${\cal Q}$. Using again the fact that $\varepsilon\in
(0,1/2)$ can be chosen to be arbitrarily small, our previous
arguments now easily lead to the conclusion that the better
inequality \eqref{eq:maineq2} holds in this case with
$c_1=2^{-1/2}$. We give no further details.

\section{Proof of Theorem \ref{thm:2}}

If $a=0$, we take $n$ to be the first prime in the arithmetical
progression $1\pmod m$. By Heath-Brown's work on the Linnik constant
\cite{HB} we know that $n\ll m^{5.5}$. Clearly,
$\lambda(n)=n-1\equiv 0\pmod m$. From now on, we assume that $1\le
a\le m-1$. If $m$ is odd, we replace $m$ by $2m$ and $a$ by the even
number among $a$ and $a+m$ (note that since $m$ is odd, it follows
that $a$ and $a+m$ have different parities). Let $a=2^{\alpha}a_0$,
where $\alpha\ge 0$ and $a_0$ is odd. We replace $m$ by
$M=2^{\alpha}m$. Writing $\nu_2(k)$ for the exponent of $2$ in the
factorization of $k$, we have that $1\le \nu_2(k)<\nu_2(M)$. Let
$d=\gcd(a,M)$. Note that $2^{\alpha}\mid d$ and that
$d/2^{\alpha}\mid m$. Put $a_1=a/d,~m_1=M/d$ and note that $d$ is
even, $a_1$ is odd and $m_1$ is even. We now search for $n=p_1p_2$,
where $p_1,~p_2$ are primes of the form
$p_1=d\lambda_1+1,~p_2=d\lambda_2+1$ and $\gcd(p_1-1,p_2-1)=d$. In
this case, $\lambda(n)=d\lambda_1\lambda_2$, and now the congruence
$\lambda(n)\equiv a\pmod M$ is equivalent to
\begin{equation}
\label{eq:lambda} \lambda_1\lambda_2\equiv a_1\pmod {m_1}.
\end{equation}
Since $p_1$ and $p_2$ need to be prime numbers, by Dirichlet's
theorem on primes in arithmetical progressions, it suffices for
their existence that aside from the congruence \eqref{eq:lambda},
the conditions $\gcd(d\lambda_1+1,m_1)=\gcd(d\lambda_2+1,m_1)=1$ are
also fulfilled.

\medskip
We consider three different possibilities for the prime divisors of
$m_1$. First let $p\mid d$ and assume that $p^{\beta}\| {m_1}$ for
some $\beta\ge 1$.  In this case the conditions
$\gcd(d\lambda_1+1,m_1)=\gcd(d\lambda_2+1,m_1)=1$ are automatically
satisfied for any choices. Moreover, since $p$ does not divide
$a_1$, we can choose $\lambda_1\equiv a_1\pmod {p^{\beta}}$ and
$\lambda_2\equiv 1\pmod {p^{\beta}}$. Note that $p=2$ is such a
prime.

\medskip

Assume next that $p\nmid d$ and let $p^{\beta}\| {m_1}$ again for
some $\beta\ge 1$. Note that both $a_1$ and $d$ are invertible
modulo $p$ and that $p$ is odd. If $a_1d^2\not\equiv -1\pmod p$,
then we choose $\lambda_1\equiv a_1d\pmod {p^{\beta}}$ and
$\lambda_2\equiv d^{-1}\pmod {p^{\beta}}$. Then certainly
$\lambda_1\lambda_2\equiv a_1\pmod {p^{\beta}}$. Furthermore,
$\lambda_1d+1\equiv a_1d^2+1\not\equiv 0\pmod p$, therefore $p$ does
not divide $\lambda_1d+1$. Similarly, $\lambda_2d+1\equiv
2\not\equiv 0\pmod p$, because $p$ is odd, so $\lambda_2d+1$ is
coprime to $p$.

\medskip

Finally, assume that $a_1d^2\equiv -1\pmod p$. Let $\rho_p$ be some
primitive root modulo $p^{\beta}$, which exists since $p$ is odd,
and take $\lambda_1\equiv \rho_p d^{-1}\pmod {p^{\beta}}$, and
$\lambda_2=a_1d\rho_p^{-1}\pmod {p^{\beta}}$. Clearly,
$\lambda_1\lambda_2\equiv a_1\pmod {p^{\beta}}$. Furthermore,
$\lambda_1d+1\equiv \rho_p+1\not\equiv 0\pmod p$, because $-1$ is
not a primitive root modulo $p$. Similarly, $\lambda_2d+1\equiv
a_1d^2\rho_p^{-1}+1\equiv -\rho_p^{-1}+1\pmod p$, and this last
congruence class is not zero since $1$ is not a primitive root
modulo $p$.

Hence, for all prime powers $p^{\beta}$ dividing $m_1$ we have
constructed congruence classes $\lambda_1$ and $\lambda_2$ modulo
$p^{\beta}$ such that the congruence \eqref{eq:lambda} holds modulo
$p^{\beta}$ and $\lambda_1d+1,~\lambda_2d+1$ are not multiples of
$p$. By Dirichlet's theorem on primes in arithmetical progressions,
we can choose two distinct primes $p_1$ and $p_2$ such that if we
put $n=p_1p_2$, then indeed $\lambda(n)\equiv a\pmod m$.
Furthermore, by Heath-Brown's result mentioned earlier, we can
choose both $\lambda_1$ and $\lambda_2$ such that
$\max\{\lambda_1,\lambda_2\}\ll m_1^{5.5}$. This shows that
$$
\max\{p_1,p_2\}\ll m_1^{5.5}d\ll
\left(\frac{2^{\alpha}m}{d}\right)^{5.5}d \ll 2^{\alpha}
m^{5.5}\left(\frac{2^{\alpha}}{d}\right)^{4.5}\ll m^{6.5},
$$
leading to $n=p_1p_2\ll m^{13}$.


\begin{thebibliography}{99999}

\bibitem{CMS} S.~D.~Cohen,~A.~Movahhedi and A.~Salinier, `Double
transitivity of Galois groups of trinomials', {\it Acta Arith.\/}
{\bf 82} (1997), 1--15.

\bibitem{DP} T.~Dence and C. Pomerance, `Euler's function in residue
classes', {\it The Ramanujan J.\/} {\bf 2} (1998), 7--20.

\bibitem{FS}  J.~ B. ~Friedlander and I.~E.~Shparlinski, `Least totient
in a residue class', {\it Bull. Lond. Math. Soc.\/} {\bf 39} (2007),
425--432.

\bibitem{FKP} K.~Ford, S.~Konyagin and C.~Pomerance, `Residue
classes free of values of the Euler function', in {\it Number Theory
in Progress\/}, K. Gy\H ory, H. Iwaniec, and J. Urbanowicz, eds.,
vol. 2, de Gruyter, Berlin and New York, 1999, 805-812.

\bibitem{HB} D.~R.~Heath-Brown, `Zero-free regions for Dirichlet
$L$-functions, and the least prime in an arithmetic progression',
{\it Proc. London Math. Soc.\/} {\bf 64} (1992), 265--338.

\bibitem{Iv} A.~Ivi\'c, {\it The Riemann Zeta-Function, Theory
and Applications\/}, Dover Publications, Minneola, New York, 2003.

\bibitem{LO} J.~C.~Lagarias and A.~M.~Odlyzko, `Effective versions of the
Chebotarev Density Theorem', in {\it Algebraic Number Fields\/},
Academic Press, New York, 1977, 409--464.

\bibitem{LMO} J.~C.~Lagarias, H.~L.~Montgomery
and A.~M.~Odlyzko, `A bound for the least prime ideal in the
Chebotarev Density Theorem', in {\it Invent. Math.\/}, {\bf 54},
(1979), 271--296.

\bibitem{N} W. Narkiewicz, `Uniform distribution of sequences of
integers in residue classes', in Lecture Notes in Math., vol. {\bf
1087}, Springer-Verlag, Berlin, 1984.

\bibitem{Nar} W.~Narkiewicz, {\it Elementary and analytic theory of
algebraic numbers\/}, Springer-Verlag, Berlin, 1990.

\bibitem{Sw} R.~G.~Swan, `Factorization of polynomials over finite
fields', {\it Pacific J. Math.\/} {\bf 12} (1962), 1099--1106.
\end{thebibliography}
\end{document}